\documentclass[12pt,letterpaper]{amsart}
\usepackage{geometry,xcolor,hyperref}
\geometry{body={6.8in,8.9in}, centering}
\usepackage{amsmath,amssymb,setspace}
\usepackage{mathpazo} 
\newtheorem{theorem}{Theorem}

\newtheorem{proposition}{Proposition}
\newtheorem{lemma}{Lemma}
\newtheorem{corollary}{Corollary}
\theoremstyle{remark}
\newtheorem{remark}{Remark}
\newcommand{\C}{\mathbb{C}}
\newcommand{\D}{\Omega}
\newcommand{\ep}{\varepsilon}
\newcommand{\Dc}{\overline{\Omega}}
\newcommand{\dbar}{\overline{\partial}}
\newcommand{\zb}{\overline{z}}

\newcommand{\vphib}{\overline{\varphi}}

\onehalfspace

\title{Schatten class Hankel and $\overline{\partial}$-Neumann operators 
	on pseudoconvex domains in $\mathbb{C}^n$}

\author{N\.{i}hat G\"{o}khan G\"{o}\u{g}\"{u}\c{s}}
\address[N\.{i}hat G\"{o}khan G\"{o}\u{g}\"{u}\c{s}]{Sabanc\i{}
    University, Tuzla, 34956, Istanbul, Turkey}
\email{nggogus@sabanciuniv.edu}

\author{S\"{o}nmez \c{S}ahuto\u{g}lu}
\address[S\"{o}nmez \c{S}ahuto\u{g}lu]{University of Toledo,
    Department of Mathematics \& Statistics, Toledo, OH 43606, USA}
\curraddr{Sabanc\i{} University, Tuzla, 34956, Istanbul, Turkey}
\email{Sonmez.Sahutoglu@utoledo.edu}

\subjclass[2010]{Primary 47B35; Secondary 32W05}
\keywords{Hankel operators, $\overline{\partial}$-Neumann problem,
    Hilbert-Schmidt, Schatten $p$-class, pseudoconvex domains}

\date{\today}
\begin{document}

\begin{abstract}
Let $\Omega$ be a  $C^2$-smooth bounded pseudoconvex domain
in $\mathbb{C}^n$ for $n\geq 2$ and let $\varphi$ be a holomorphic 
function on $\D$ that is $C^2$-smooth on the closure of  $\Omega$. 
We prove that if $H_{\overline{\varphi}}$ is in Schatten $p$-class for 
$p\leq 2n$ then $\varphi$ is a constant function. As a corollary, we 
show that the $\overline{\partial}$-Neumann operator on $\Omega$
is not Hilbert-Schmidt.
\end{abstract}

\maketitle

Let $\D$ be a bounded domain in $\C^n$ and $A^2(\D)$
denote the Bergman space, the set of square integrable
holomorphic functions on $\D$. We define the Hankel
operator $H_{\varphi}:A^2(\D)\to L^2(\D)$ with symbol
$\varphi\in L^{\infty}(\D)$ as follows: $H_{\varphi}f=(I-P)(\varphi f)$
for $f\in A^2(\D)$, where $I$ is the identity map and
$P:L^2(\D)\to A^2(\D)$ is the Bergman projection.

In this paper we study Schatten $p$-class membership of Hankel
operators. The Hankel operator $H_{\varphi}$ is said to be in the
Schatten $p$-class, $S_p$,  if the operator
$(H^*_{\varphi}H_{\varphi})^{p/2}$ is in the trace class, $S_1$. We 
recall that a self-adjoint compact operator on a separable Hilbert 
space is in $S_1$ if its eigenvalues are absolutely summable. We
note that $S_2$ is the class of Hilbert-Schmidt operators and we refer 
the reader to \cite{ZhuBook} for more information about these notions.

On the unit disc, $\mathbb{D}\subset \C$,
Arazy-Fisher-Peetre \cite{ArazyFisherPeetre88}
(see also \cite[Theorem 8.29]{ZhuBook})
showed that for $\varphi\in A^2(\mathbb{D})$ the
Hankel operator $H_{\vphib}$ is in the Schatten $p$-class
if and only if $\varphi$ is in the Besov space $B_p$ consisting of
holomorphic functions $\varphi$ on $\mathbb{D}$ such that
\[\int_{\mathbb{D}} |\varphi'(z)|^p(1-|z|^2)^{p-2}dV(z)<\infty\]
where $dV$ is the Lebesgue measure.

In higher dimensions, that is $\D\subset \C^n$ for $n\geq 2$, 
the first result is due to Kehe Zhu. He \cite{Zhu90} showed that 
in case $\D$ is the unit ball and $\varphi$ is holomorphic, 
$H_{\vphib}\in S_p$ for $p\leq 2n$ if and only if $\varphi$ is 
constant. Since then Schatten $p$-class membership of Hankel 
operators has been studied by many authors. For example, to 
list a few, it has been studied on the unit ball \cite{Zhu91,Xia02,Pau16},
strongly pseudoconvex domains \cite{Li93}, finite type pseudoconvex
domains in $\C^2$ \cite{KrantzLiRochberg97}, Reinhardt domains 
\cite{Le14,CelikZeytuncu13,CelikZeytuncu17}, and the  Fock spaces 
\cite{Schneider04,Schneider09,SeipYoussfi13}. In this paper,
we study it on $C^2$-smooth bounded pseudoconvex domains in
$\C^n$ for $n\geq 2$.  Throughout the paper $\mathcal{O}(\D)$
denotes the space of holomorphic functions on $\D$.

Our main result is the following theorem.
\begin{theorem}\label{ThmMain}
    Let $\D$ be a $C^2$-smooth bounded pseudoconvex domain in
    $\C^n$ for $n\geq 2$ and $\varphi\in \mathcal{O}(\D)\cap C^2(\Dc)$.
    Then $H_{\vphib}$ is in $S_{p}$ for $p\leq 2n$ if and only if
    $\varphi$ is a constant function.
\end{theorem}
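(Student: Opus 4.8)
The forward (trivial) direction is immediate: if $\varphi$ is constant then so is $\vphib$, and for holomorphic $f\in A^2(\D)$ we get $H_{\vphib}f=(I-P)(\vphib f)=\vphib(I-P)f=0$, so $H_{\vphib}=0$ lies in every $S_p$. For the converse I argue by contraposition: assuming $\varphi$ non-constant, I will show $H_{\vphib}\notin S_{2n}$. Since $S_p\subseteq S_{2n}$ whenever $p\le 2n$, failure of the endpoint membership gives failure for every $p\le 2n$, so it suffices to treat $p=2n$; note $2n\ge 2$, which will let me use standard $S_p$ lower-bound machinery. The first step is to move the obstruction to the boundary. Each component $\partial_j\varphi$ is holomorphic and continuous on $\Dc$, so by the maximum modulus principle it cannot vanish on all of $\partial\D$ unless it vanishes identically on $\D$; since $\varphi$ is non-constant, some $\partial_j\varphi\not\equiv 0$, and hence there is $q\in\partial\D$ with $\partial\varphi(q)\ne 0$. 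I would in fact arrange that $q$ is a point at which $\partial\varphi$ has a nonzero complex-tangential component: a gradient that is complex-normal along the entire boundary forces $\varphi$ to be annihilated by all tangential CR derivatives and hence, by connectedness and the maximum principle, to be constant.

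The analytic engine is the identification of $H_{\vphib}$ with a canonical solution operator for $\dbar$. For $f\in A^2(\D)$ one has $\dbar(\vphib f)=f\,\overline{\partial\varphi}$, and $H_{\vphib}f=\vphib f-P(\vphib f)$ is precisely the $L^2$-minimal solution of $\dbar u=f\,\overline{\partial\varphi}$; thus $\|H_{\vphib}f\|=\operatorname{dist}_{L^2}(\vphib f,A^2(\D))$. From this I extract a robust pointwise lower bound that needs no Bergman-kernel asymptotics: for any smooth $(0,1)$-form $\beta$ compactly supported in $\D$ (hence automatically in the domain of $\dbar^*$, with $\dbar^*\beta\perp A^2(\D)$), integration by parts gives $\langle f\,\overline{\partial\varphi},\beta\rangle=\langle H_{\vphib}f,\dbar^*\beta\rangle$, so that
\[
\|H_{\vphib}f\|\ \ge\ \frac{|\langle f\,\overline{\partial\varphi},\beta\rangle|}{\|\dbar^*\beta\|}.
\]
The plan is to feed this inequality with $f$ and $\beta$ concentrated on a non-isotropic box near $q$.

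Here is the geometric heart, which also explains the threshold $2n$. Near $q$ I choose, at each small depth $\delta$, a non-isotropic box $Q_\delta\subset\D$ of normal extent $\delta$ and complex-tangential extent $\delta^{1/2}$; that such boxes fit inside $\D$ is exactly what $C^2$-smooth pseudoconvexity guarantees (a more degenerate Levi form only opens up more tangential room, so the strongly pseudoconvex model is the extremal case). Taking $f$ an $L^2$-normalized holomorphic function concentrated on $Q_\delta$ and $\beta$ a cutoff aligned with the complex-tangential part of $\overline{\partial\varphi}$, the denominator is dominated by the tangential derivative of the cutoff, of size $\delta^{-1/2}$, while the numerator is of size $|\partial_T\varphi(q)|^2$; the inequality above then yields $\|H_{\vphib}f\|\gtrsim \delta^{1/2}$. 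I would next build, at each dyadic depth $\delta_k=2^{-k}$, a subspace $V_k\subset A^2(\D)$ spanned by such functions attached to a maximal family of disjoint boxes tiling a fixed boundary patch around $q$; the number of boxes is $d_k\approx\delta_k^{-n}$, the $V_k$ are almost orthogonal across scales, and $\|H_{\vphib}f\|\gtrsim\delta_k^{1/2}\|f\|$ on each $V_k$. A min--max argument then shows that the singular value sequence of $H_{\vphib}$ dominates, up to a constant, the multiset consisting of $\delta_k^{1/2}$ repeated $d_k$ times, whence
\[
\sum_j s_j(H_{\vphib})^{2n}\ \gtrsim\ \sum_k d_k\,\delta_k^{\,n}\ \approx\ \sum_k \delta_k^{-n}\,\delta_k^{\,n}\ =\ \sum_k 1\ =\ \infty .
\]
The exponent $2n$ is exactly the value at which the tangential gain $\delta^{n}$ cancels the packing count $\delta^{-n}$, so $H_{\vphib}\notin S_{2n}$, as required; heuristically this is the divergence of $\int_{\D}\delta^{\,n}\,\delta^{-(n+1)}\,dV$, the expected $S_{2n}$ integral with the tangential scaling $\|H_{\vphib}k_a\|^2\approx\delta$ and the diagonal Bergman kernel $\approx\delta^{-(n+1)}$.

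The step I expect to be the main obstacle is making the non-isotropic lower bound genuinely robust on an arbitrary $C^2$-smooth pseudoconvex domain, where no Bergman-kernel or metric asymptotics are available and the boundary may be of infinite type. Two points carry the difficulty: first, verifying that pseudoconvexity alone supplies boxes with tangential extent at least $\delta^{1/2}$ together with the correct packing count, and that the scale-by-scale contributions remain uniformly bounded below as the Levi form degenerates, so the series still diverges; and second, securing a genuine complex-tangential fluctuation of $\varphi$, that is, the reduction that a boundary gradient which is everywhere complex-normal forces $\varphi$ constant. Controlling the almost-orthogonality of the $V_k$ and the error terms in the integration by parts, uniformly in $\delta$, is where the careful analysis must go.
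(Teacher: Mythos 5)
Your outline is a genuinely different strategy from the paper's (which goes through the Berezin transform, Zhu's trace-class lemmas, the B\'ekoll\'e--Berger--Coburn--Zhu derivative estimate, and Diederich's Bergman metric estimate), but as written it has an essential gap that is not merely technical. The entire lower bound rests on producing, at every scale $\delta$, about $\delta^{-n}$ unit vectors of $A^2(\D)$ concentrated on disjoint non-isotropic boxes near your chosen point $q$, almost orthogonal both within a scale and across scales, and in a strong enough sense that $\|H_{\vphib}f\|\gtrsim\delta^{1/2}\|f\|$ holds on the whole span (which is what the min--max principle requires, not just the bound on basis vectors). Such concentrated holomorphic functions amount to a quantitative lower bound plus off-diagonal decay for the Bergman kernel, i.e.\ $K(z,z)\gtrsim (d_{b\D}(z))^{-n-1}$ with localization at the Kor\'anyi scale; on a general $C^2$-smooth pseudoconvex domain this fails at infinite-type boundary points, and your chosen $q$ (a point where $\partial\varphi$ has nonvanishing tangential component) has no reason to be a point where these estimates hold. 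Your remark that ``a more degenerate Levi form only opens up more tangential room, so the strongly pseudoconvex model is the extremal case'' points the wrong way: more room makes the boxes fit, but it makes the Bergman kernel smaller and destroys both the concentration and the packing count $\delta^{-n}$ that your divergent series $\sum_k d_k\delta_k^{\,n}$ needs.

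The paper avoids this by never attempting estimates at an arbitrary boundary point. It runs the implication forwards ($H_{\vphib}\in S_{2n}$ forces $\int_{\D}(MO(\varphi,z))^nK(z,z)\,dV<\infty$), localizes at a strongly pseudoconvex boundary point $z_0$ (such points always exist, by Basener), where Diederich's bound on the Bergman metric and H\"ormander's $K(z,z)\approx (d_{b\D}(z))^{-n-1}$ are available, deduces that $\partial_b\varphi\equiv 0$ on a boundary neighborhood of $z_0$ only, and then invokes a CR-extension argument (Proposition \ref{PropCR}) to conclude constancy of $\varphi$ from the vanishing of $\partial_b\varphi$ on that small piece alone. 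That proposition is also the correct substitute for your unproved claim that a boundary gradient which is everywhere complex-normal forces $\varphi$ to be constant ``by connectedness and the maximum principle'': the actual argument needs the Lewy-type holomorphic extension of the CR function $\vphib$ from a boundary piece carrying a positive Levi eigenvalue, plus a harmonicity argument identifying the extension with $\vphib$, so that $\vphib$ is simultaneously holomorphic and antiholomorphic. If you want to salvage the packing argument, you must carry it out at a strongly pseudoconvex point and first show that $\partial_b\varphi$ cannot vanish identically near such a point unless $\varphi$ is constant --- which is precisely the content of Proposition \ref{PropCR}.
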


The following is a trivial corollary of Theorem \ref{ThmMain}.
\begin{corollary}\label{CorMain}
    Let $\D$ be a $C^2$-smooth bounded pseudoconvex domain
    in $\C^n$ for $n\geq 2$ and  $\varphi\in \mathcal{O}(\D)\cap C^2(\Dc)$.
    Then $H_{\vphib}$ is Hilbert-Schmidt on the Bergman space $A^2(\D)$
    if and only if $\varphi$ is a constant function.
\end{corollary}

Hankel operators, through the Kohn's formula, are connected to
the $\dbar$-Neumann operator, an important tool in several
complex variables. Now we explain this connection.

Let $\dbar\dbar^*+\dbar^*\dbar$ be the complex Laplacian 
on $L^2_{(0,1)}(\D)$, the square integrable $(0,1)$-forms on $\D$. 
This is an unbounded, self-adjoint, closed operator. 
H\"{o}rmander \cite{Hormander65} showed that (see also 
\cite[Theorem 4.4.1]{ChenShawBook}), if $\D$ is a bounded 
pseudoconvex domain in $\C^n$, then the complex Laplacian 
has a bounded solution operator $N_1$, called the $\dbar$-Neumann 
operator. Furthermore, Kohn \cite{Kohn63} (see also 
\cite[Theorem 4.4.5]{ChenShawBook}) proved that the Bergman 
projection and $N_1$ are connected trough the following formula
\[P=I-\dbar^*N_1\dbar.\]
Therefore, one can show that if $\D$ is a bounded pseudoconvex
domain and  $\varphi\in C^1(\Dc)$ then
$H_{\varphi}f=\dbar^*N_1(f\dbar\varphi)$ for $f\in A^2(\D)$.
So it is reasonable to expect $H_{\varphi}$ to be closely connected
to $N_1$. Indeed this is true in terms of compactness of the operators.
We refer the reader to \cite[Proposition 4.1]{StraubeBook}
and  \cite{CuckovicSahutoglu09,CelikSahutoglu14,SahutogluZeytuncu17}
for some recent results in this direction, and to books \cite{ChenShawBook,StraubeBook,HaslingerBook}
for more information about the $\dbar$-Neumann problem. 

In terms of Schatten $p$-class membership of 
$N_1:L^2_{(0,1)}(\D)\to L^2_{(0,1)}(\D)$ we have the following corollary, 
which will be proven at the end of the paper. We note that the result 
in Corollary \ref{CorN} below also holds for the restriction of $N_1$ 
onto $A^2_{(0,1)}(\Omega )$,  the space of $(0,1)$-forms with square 
integrable holomorphic coefficients on $\D.$ Furthermore, while 
$\overline \partial^* N_1$ (canonical solution operator to $\dbar$)   
is Hilbert-Schmidt for $\D=\mathbb{D}\subset \mathbb{C}$, it 
fails to be Hilbert-Schmidt when $\D$ is the unit ball in $\mathbb C^n$ 
for $n\ge 2$.  We refer the reader to \cite[Chapter 2]{HaslingerBook} and 
the references therein for results about Schatten $p$-class membership of
$\dbar^*N_1$.

\begin{corollary}\label{CorN}
    Let $\D$ be a $C^2$-smooth bounded pseudoconvex domain in $\C^n$
    for $n\geq 2$ and $N_1$ denote the $\dbar$-Neumann operator.
    Then $\dbar^*N_1$ is not in $S_4$ and $N_1$ is not Hilbert-Schmidt.
\end{corollary}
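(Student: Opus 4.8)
The plan is to deduce both statements from Theorem~\ref{ThmMain} by means of the factorization of $H_{\vphib}$ through $\dbar^*N_1$ recorded above. Fix once and for all a nonconstant symbol, say $\varphi(z)=z_1$, which lies in $\mathcal{O}(\D)\cap C^2(\Dc)$. Since $\varphi$ is not constant, Theorem~\ref{ThmMain} gives $H_{\vphib}\notin S_{2n}$; as the Schatten classes are nested and $n\geq 2$ forces $S_4\subseteq S_{2n}$, we conclude at once that $H_{\vphib}\notin S_4$ as well.

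For the first assertion, recall from the paragraph preceding the corollary that $H_{\vphib}f=\dbar^*N_1\bigl(f\,\dbar\vphib\bigr)$ for all $f\in A^2(\D)$. Because $\varphi\in C^2(\Dc)$, the coefficients of the $(0,1)$-form $\dbar\vphib$ are continuous on the compact set $\Dc$, so the multiplication operator $M\colon A^2(\D)\to L^2_{(0,1)}(\D)$ given by $Mf=f\,\dbar\vphib$ is bounded; thus $H_{\vphib}=(\dbar^*N_1)\,M$. If $\dbar^*N_1$ were in $S_4$, the ideal property of $S_4$ would give $H_{\vphib}=(\dbar^*N_1)M\in S_4$, contradicting the first paragraph. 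Hence $\dbar^*N_1\notin S_4$.

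For the second assertion I would compare $N_1$ and $\dbar^*N_1$ at the level of positive operators. Write $\Box=\dbar\dbar^*+\dbar^*\dbar$ for the complex Laplacian, so that $N_1=\Box^{-1}$ on $L^2_{(0,1)}(\D)$. Applying the basic identity $\|\dbar u\|^2+\|\dbar^*u\|^2=\langle\Box u,u\rangle$ (see \cite{ChenShawBook}) to $u=N_1v$ and using $\Box N_1v=v$ yields
\[\|\dbar^*N_1v\|^2\leq\|\dbar N_1v\|^2+\|\dbar^*N_1v\|^2=\langle N_1v,v\rangle\]
for every $v\in L^2_{(0,1)}(\D)$. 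Since $N_1$ is positive and self-adjoint we have $(\dbar^*N_1)^*=N_1\dbar$, so this inequality says, in the form sense,
\[0\leq (\dbar^*N_1)^*(\dbar^*N_1)=N_1\dbar\dbar^*N_1\leq N_1\]
as operators on $L^2_{(0,1)}(\D)$. Suppose now that $N_1$ is Hilbert-Schmidt, i.e.\ $N_1\in S_2$. Then $N_1$ is compact, and the min-max comparison of eigenvalues of dominated positive operators gives $N_1\dbar\dbar^*N_1\in S_2$; but this is precisely the statement $\dbar^*N_1\in S_4$, contradicting the first assertion. Therefore $N_1$ is not Hilbert-Schmidt.

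The only genuinely delicate point is the operator inequality of the last paragraph: one must apply the basic identity on the proper domain (the range of $N_1$, which lies in $\mathrm{Dom}(\dbar)\cap\mathrm{Dom}(\dbar^*)$) and interpret $(\dbar^*N_1)^*(\dbar^*N_1)\leq N_1$ as an inequality of quadratic forms, so that the Schatten domination principle ``$0\leq A\leq B$ and $B\in S_p$ imply $A\in S_p$'' may be invoked legitimately.
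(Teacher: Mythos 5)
Your argument is correct, and the first half (deducing $\dbar^*N_1\notin S_4$ from $H_{\zb_1}=(\dbar^*N_1)M$ with $M$ the bounded multiplication $f\mapsto f\,d\zb_1$, plus the ideal property and Theorem~\ref{ThmMain} with $S_4\subseteq S_{2n}$) is exactly what the paper does. Where you diverge is the passage from $\dbar^*N_1\notin S_4$ to $N_1\notin S_2$. The paper invokes Range's theorem, which gives the \emph{exact} identity $N_1=(\dbar^*N_1)^*\dbar^*N_1$ on $\mathrm{Ker}(\dbar)$, and then applies the equivalence $T\in S_p\Leftrightarrow T^*T\in S_{p/2}$ to the restriction of $\dbar^*N_1$ to $A^2_{(0,1)}(\D)\subset\mathrm{Ker}(\dbar)$; this suffices because the Hankel factorization only ever feeds $\dbar^*N_1$ forms with holomorphic coefficients. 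You instead use the basic energy identity $\|\dbar N_1v\|^2+\|\dbar^*N_1v\|^2=\langle v,N_1v\rangle$ to get the form inequality $(\dbar^*N_1)^*(\dbar^*N_1)\leq N_1$ on all of $L^2_{(0,1)}(\D)$, and then the eigenvalue-domination principle for positive operators. Your route is more self-contained --- it avoids Range's theorem entirely and needs only the defining identity $\Box N_1=I$ together with the standard fact that $0\leq A\leq B$ with $B\in S_p$ forces $A\in S_p$ (which, as you note, requires first observing that $A$ is compact, e.g.\ via Douglas factorization, before min-max applies) --- while the paper's route trades that domination lemma for an exact operator identity on the smaller subspace. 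Both reduce to the same non-membership $H_{\zb_1}\notin S_4$, and both are valid; your version even yields the slightly cleaner statement that the comparison holds on the whole of $L^2_{(0,1)}(\D)$ rather than only on $\mathrm{Ker}(\dbar)$.
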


The rest of the paper is organized as follows. In the next section we
will present some necessary basic results that are well known.
We include them here for the convenience of the reader. In the
last section we give the proofs of Theorem \ref{ThmMain} and
Corollary \ref{CorN}.

\section*{Preparatory Results}

In this section we will include some preparatory results
that will be useful in the proof of Theorem \ref{ThmMain}. 
We include them here for the convenience of the reader 
	but we don't claim any originality about these results.

Let $\D$ be a bounded domain and $\varphi \in L^{\infty}(\D)$.
Then the Berezin transform of $\varphi$ is defined as
\[ \widetilde{\varphi}(z)=\int_{\D}|k_z(\xi)|^2\varphi(\xi)dV(\xi)\]
where $k_z(\xi)=\frac{K(\xi,z)}{\sqrt{K(z,z)}}$. Furthermore, we define
\[MO(\varphi,z)=\widetilde{|\varphi|^2}(z)-|\widetilde{\varphi}(z)|^2.\]
We denote $H^{\infty}(\D)=\mathcal{O}(\D)\cap L^{\infty}(\D)$. In
case $\varphi\in H^{\infty}(\D)$ we have
\[MO(\varphi,z)=\widetilde{|\varphi|^2}(z)-|\varphi(z)|^2\]
as $\widetilde{\varphi}=\varphi$.

\begin{lemma}\label{LemBProj}
    Let $\D$ be a bounded domain in $\C^n$ and
    $\varphi\in H^{\infty}(\D)$. Then
    $P\vphib k_z=\overline{\varphi(z)}k_z$ for $z\in \D$.
\end{lemma}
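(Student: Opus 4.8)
The plan is to verify the identity by testing against an arbitrary element of the Bergman space and exploiting the reproducing property of the kernel $K$. Since both $P\vphib k_z$ and $\overline{\varphi(z)}\,k_z$ already lie in $A^2(\D)$, it suffices to show that $\langle P\vphib k_z, f\rangle = \langle \overline{\varphi(z)}\,k_z, f\rangle$ for every $f\in A^2(\D)$, and then invoke the nondegeneracy of the inner product. This avoids any pointwise manipulation of integral kernels.

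First I would use the self-adjointness of $P$ together with $Pf=f$ to write $\langle P\vphib k_z, f\rangle = \langle \vphib k_z, f\rangle$. The central observation is that for $\varphi\in H^{\infty}(\D)$ and $f\in A^2(\D)$ the product $\varphi f$ is again holomorphic and square integrable, so that $\vphib\,\overline{f}=\overline{\varphi f}$ lets one recognize $\langle \vphib k_z, f\rangle = \langle k_z, \varphi f\rangle$. Next I would evaluate $\langle k_z, \varphi f\rangle$ using the normalization $k_z = K(\cdot,z)/\sqrt{K(z,z)}$ and the reproducing identity $\langle g, K(\cdot,z)\rangle = g(z)$ for $g\in A^2(\D)$, which gives $\langle k_z, \varphi f\rangle = \overline{\varphi(z)f(z)}/\sqrt{K(z,z)}$. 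On the other hand, pulling the scalar out of the first slot yields $\langle \overline{\varphi(z)}\,k_z, f\rangle = \overline{\varphi(z)}\,\langle k_z, f\rangle = \overline{\varphi(z)}\,\overline{f(z)}/\sqrt{K(z,z)}$, and the two expressions agree.

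The only points requiring care, and the closest thing to an obstacle in an otherwise routine argument, are the two membership claims: that $\vphib k_z\in L^2(\D)$ so that $P$ may be applied (immediate, since $k_z\in A^2(\D)$ and $\varphi$ is bounded), and that $\varphi f\in A^2(\D)$ so that the reproducing property applies to it (again immediate from boundedness of $\varphi$). An equivalent route is to insert the integral representation $(Pg)(w)=\int_{\D} K(w,\xi)g(\xi)\,dV(\xi)$ and compute directly, rewriting $K(w,\xi)\vphib(\xi)=\overline{\varphi(\xi)K(\xi,w)}$ and applying the reproducing property to the holomorphic function $\xi\mapsto \varphi(\xi)K(\xi,w)$; this produces the same answer but forces one to justify an interchange of integration, which the duality argument sidesteps.
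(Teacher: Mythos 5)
Your argument is correct. The paper proves the lemma by a direct pointwise computation: it writes $P\vphib k_z(w)=\int_{\D}K(w,\xi)\overline{\varphi(\xi)}k_z(\xi)\,dV(\xi)$, conjugates to recognize the integrand as $K(z,\xi)$ applied to the holomorphic function $\xi\mapsto\varphi(\xi)K(\xi,w)$, and invokes the reproducing property once to land on $\overline{\varphi(z)}k_z(w)$. You instead test both sides against an arbitrary $f\in A^2(\D)$, using self-adjointness of $P$ and the reproducing property applied to $\varphi f$, and conclude by nondegeneracy of the inner product; this is a genuinely different presentation, though the engine is the same in both cases, namely that $\varphi g\in A^2(\D)$ for $g\in A^2(\D)$ and $\varphi\in H^\infty(\D)$, so the reproducing identity may be applied to the product. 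Your route buys a weak-formulation cleanliness (no pointwise kernel manipulation), while the paper's route delivers the slightly stronger conclusion that the identity holds pointwise at every $w$ without appealing to the fact that an $A^2$ element is determined by its inner products. One small correction to your closing remark: the direct computation does not actually require an interchange of integration --- there is only one integral, and the paper evaluates it by applying the reproducing property to $\xi\mapsto\varphi(\xi)K(\xi,w)$, exactly parallel to your application of it to $\varphi f$; no Fubini-type justification is needed in either approach.
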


\begin{proof}
        Let $z,w\in \D$. Then
\begin{align*}
P\vphib k_z(w)
=&\int_{\D}K(w,\xi)\overline{\varphi(\xi)}k_z(\xi)dV(\xi)\\
 =&\int_{\D}K(w,\xi)\frac{K(\xi,z)}{\sqrt{K(z,z)}}\overline{\varphi(\xi)}dV(\xi)\\
=&\frac{1}{\sqrt{K(z,z)}} \overline{\int_{\D}K(z,\xi)K(\xi,w)\varphi(\xi)dV(\xi)}\\
=&\frac{1}{\sqrt{K(z,z)}}\overline{K(z,w)\varphi(z)}\\
=&\overline{\varphi(z)}k_z(w).
\end{align*}
Hence the proof of the lemma is complete.
\end{proof}

\begin{corollary}
    Let $\D$ be a  bounded domain in $\C^n$ and
    $\varphi\in H^{\infty}(\D)$. Then
\[H_{\vphib}k_z(w)
    =(\overline{\varphi(w)}-\overline{\varphi(z)})k_z(w)\]
    for $z,w\in \D$.
\end{corollary}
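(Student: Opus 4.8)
The plan is to unwind the definition of the Hankel operator and then apply Lemma \ref{LemBProj} directly, so almost no work is required beyond bookkeeping. By definition $H_{\vphib}=(I-P)(\vphib\,\cdot\,)$, hence applied to the normalized reproducing kernel $k_z$ we have $H_{\vphib}k_z=\vphib k_z-P(\vphib k_z)$. Evaluating the first term pointwise at $w$ simply gives $\overline{\varphi(w)}k_z(w)$, since multiplication by $\vphib$ acts pointwise on functions.

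The only substantive input is the computation of the projected term $P(\vphib k_z)$, which is precisely the content of the preceding Lemma \ref{LemBProj}: it equals $\overline{\varphi(z)}k_z$. Substituting this and evaluating at $w$ yields $H_{\vphib}k_z(w)=\overline{\varphi(w)}k_z(w)-\overline{\varphi(z)}k_z(w)=(\overline{\varphi(w)}-\overline{\varphi(z)})k_z(w)$, which is the claimed identity. I do not anticipate any real obstacle, as the statement is an immediate corollary of the lemma; the single point worth keeping straight is that the hypothesis $\varphi\in H^{\infty}(\D)$ guarantees $\vphib k_z\in L^2(\D)$, so that the Bergman projection is defined on it and Lemma \ref{LemBProj} indeed applies.
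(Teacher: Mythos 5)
Your proof is correct and matches the paper's intent exactly: the paper states this corollary without proof immediately after Lemma \ref{LemBProj}, precisely because it follows by the one-line computation you give, namely $H_{\vphib}k_z=\vphib k_z-P(\vphib k_z)=\vphib k_z-\overline{\varphi(z)}k_z$.
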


\begin{lemma}\label{LemHankel}
Let $\D$ be a bounded domain in $\C^n$ and $\varphi\in H^{\infty}(\D)$.
Then $\|H_{\vphib}k_z\|^2=MO(\varphi,z)$.
\end{lemma}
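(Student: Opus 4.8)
The plan is to compute the norm directly from the pointwise formula for $H_{\vphib}k_z$ supplied by the Corollary immediately preceding this lemma, expand the modulus squared, and recognize each resulting integral as a Berezin transform. First I would invoke that Corollary to write $H_{\vphib}k_z(w)=(\overline{\varphi(w)}-\overline{\varphi(z)})k_z(w)$, so that
\[
\|H_{\vphib}k_z\|^2=\int_{\D}|\varphi(w)-\varphi(z)|^2\,|k_z(w)|^2\,dV(w).
\]
Expanding $|\varphi(w)-\varphi(z)|^2=|\varphi(w)|^2-\overline{\varphi(z)}\varphi(w)-\varphi(z)\overline{\varphi(w)}+|\varphi(z)|^2$ and integrating term by term against the measure $|k_z(w)|^2\,dV(w)$ reduces everything to three facts: that $\|k_z\|=1$, that $\int_{\D}\varphi(w)|k_z(w)|^2\,dV(w)=\widetilde{\varphi}(z)$ by the definition of the Berezin transform, and that the first term is precisely $\widetilde{|\varphi|^2}(z)$.

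Next, because $\varphi\in H^{\infty}(\D)$ is holomorphic, the Berezin transform fixes it, $\widetilde{\varphi}(z)=\varphi(z)$ (this is the same identity used in the excerpt when reducing $MO(\varphi,z)$ for bounded holomorphic symbols), and conjugating yields $\int_{\D}\overline{\varphi(w)}|k_z(w)|^2\,dV(w)=\overline{\varphi(z)}$. Substituting these into the expansion, the two cross terms each contribute $-|\varphi(z)|^2$ while the final term contributes $+|\varphi(z)|^2$, so that
\[
\|H_{\vphib}k_z\|^2=\widetilde{|\varphi|^2}(z)-|\varphi(z)|^2=MO(\varphi,z),
\]
which is the claimed identity.

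There is no serious obstacle here, as the argument is a direct computation. The two points deserving a word of care are the normalization $\|k_z\|=1$, which follows from $k_z=K(\cdot,z)/\sqrt{K(z,z)}$ together with the reproducing property $\int_{\D}|K(w,z)|^2\,dV(w)=K(z,z)$, and the fact that collapsing the cross terms genuinely relies on the holomorphicity of $\varphi$ through the identity $\widetilde{\varphi}=\varphi$; without it one would only recover the general formula $\widetilde{|\varphi|^2}(z)-|\widetilde{\varphi}(z)|^2$.
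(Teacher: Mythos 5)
Your proof is correct and follows essentially the same route as the paper: both arguments reduce the computation to the identity $P\vphib k_z=\overline{\varphi(z)}k_z$ of Lemma \ref{LemBProj} (you via the intervening Corollary), the normalization $\|k_z\|=1$, and the fact that $\widetilde{\varphi}=\varphi$ for holomorphic $\varphi$. The only difference is organizational: the paper works with inner products and the orthogonality of $I-P$ to get a two-term expression, while you expand $|\varphi(w)-\varphi(z)|^2$ pointwise into four terms, which collapses to the same answer.
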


\begin{proof}
    Let $z\in \D$. Lemma \ref{LemBProj} implies that
    $P\vphib k_z=\overline{\varphi(z)}k_z$. Then
\begin{align*}
\left\|H_{\vphib}k_z\right\|^2
=&\langle H_{\vphib}k_z,H_{\vphib}k_z\rangle\\
=&\langle \vphib k_z,\vphib k_z\rangle 
	-\langle P\vphib k_z,\vphib k_z\rangle\\
=&\langle |\vphib|^2 k_z, k_z\rangle-\overline{\varphi(z)}
    \langle  k_z,\vphib k_z\rangle\\
=&\widetilde{|\varphi|^2}(z)-|\varphi(z)|^2\\
=&MO(\varphi,z).
\end{align*}
    Hence the proof of the lemma is complete.
\end{proof}

We note that even though Lemmas  1 and 2 in \cite{Zhu91}
(used in the proof below) are stated for the ball, they are
actually true on any domain. The following corollary can
also be deduced from \cite[Theorem 3.1]{Li93}. We
present a proof here for the convenience of the reader.

\begin{corollary}\label{CorMO}
    Let $\D$ be a bounded domain in $\C^n, p\geq 2$,
    and $\varphi\in H^{\infty}(\D)$. Then $ H_{\vphib}\in S_p$
    implies that $\int_{\D}(MO(\varphi,z))^{p/2}K(z,z) dV(z)<\infty.$
\end{corollary}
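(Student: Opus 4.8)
The plan is to reduce the statement to a single clean inequality for the Berezin transform of a positive operator. Set $T=H_{\vphib}^*H_{\vphib}$. Since $H_{\vphib}\in S_p$, the operator $T$ is positive and lies in $S_{p/2}$, with $\|T\|_{S_{p/2}}^{p/2}=\|H_{\vphib}\|_{S_p}^p$. Writing $q=p/2$, the hypothesis $p\geq 2$ becomes $q\geq 1$, which is exactly what I will need for a convexity argument. By Lemma \ref{LemHankel} the Berezin transform of $T$ is
$$\langle Tk_z,k_z\rangle=\|H_{\vphib}k_z\|^2=MO(\varphi,z),$$
so the desired conclusion is precisely $\int_{\D}\langle Tk_z,k_z\rangle^{\,q}K(z,z)\,dV(z)<\infty$.

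First I would invoke the spectral theorem: since $T$ is positive and in $S_q\subset S_\infty$, it is compact and admits an orthonormal basis $\{\phi_n\}$ of $A^2(\D)$ consisting of eigenvectors, with eigenvalues $\lambda_n\geq 0$ and $\|T\|_{S_q}^q=\sum_n\lambda_n^q$. Expanding $k_z$ against this basis and using $\langle k_z,\phi_n\rangle=\overline{\phi_n(z)}/\sqrt{K(z,z)}$ together with $\sum_n|\phi_n(z)|^2=K(z,z)$, I would obtain
$$\langle Tk_z,k_z\rangle=\sum_n\lambda_n\frac{|\phi_n(z)|^2}{K(z,z)},\qquad \sum_n\frac{|\phi_n(z)|^2}{K(z,z)}=1.$$
Thus for each fixed $z$ the weights $\mu_z(n)=|\phi_n(z)|^2/K(z,z)$ form a probability distribution, and $\langle Tk_z,k_z\rangle$ is the $\mu_z$-average of the eigenvalues $\lambda_n$.

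The key step is then Jensen's inequality. Because $t\mapsto t^{q}$ is convex for $q\geq 1$,
$$\langle Tk_z,k_z\rangle^{\,q}=\Big(\sum_n\lambda_n\mu_z(n)\Big)^{q}\leq \sum_n\lambda_n^{q}\,\mu_z(n)=\frac{1}{K(z,z)}\sum_n\lambda_n^{q}|\phi_n(z)|^2.$$
Multiplying by $K(z,z)$, integrating over $\D$, and interchanging the nonnegative sum and integral by Tonelli's theorem gives
$$\int_{\D}\langle Tk_z,k_z\rangle^{\,q}K(z,z)\,dV(z)\leq \sum_n\lambda_n^{q}\int_{\D}|\phi_n(z)|^2\,dV(z)=\sum_n\lambda_n^{q}=\|T\|_{S_q}^q<\infty,$$
where I used $\|\phi_n\|=1$. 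Substituting $q=p/2$ and the identity for the Berezin transform yields exactly $\int_{\D}(MO(\varphi,z))^{p/2}K(z,z)\,dV(z)<\infty$.

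The only genuinely delicate points are organizational rather than deep: verifying the reproducing-kernel identities $\langle k_z,\phi_n\rangle=\overline{\phi_n(z)}/\sqrt{K(z,z)}$ and $\sum_n|\phi_n(z)|^2=K(z,z)$, which hold on any bounded domain and are what make the argument independent of the geometry of $\D$, and justifying the term-by-term integration. The hypothesis $p\geq 2$ enters in exactly one place, namely the convexity needed for Jensen, so I do not expect to require any separated-lattice or almost-orthogonality machinery here.
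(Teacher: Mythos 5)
Your proof is correct and follows essentially the same route as the paper: the paper obtains the bound by citing Zhu's trace formula $\operatorname{tr}(S)=\int_{\D}\langle Sk_z,k_z\rangle K(z,z)\,dV(z)$ for positive trace-class $S$ together with the inequality $\langle Tk_z,k_z\rangle^{p/2}\leq\langle T^{p/2}k_z,k_z\rangle$ from \cite{Zhu91}, and your spectral-decomposition-plus-Jensen argument is exactly a self-contained proof of those two cited lemmas merged into one computation. The only cosmetic difference is that the paper keeps $T^{p/2}$ explicit while you diagonalize $T$ and raise the eigenvalues to the power $q=p/2$ directly.
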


\begin{proof}
    Let us assume that $H_{\vphib}\in S_p$ for $p\geq 2$. Then
    $(H_{\vphib}^*H_{\vphib})^{p/2}$ is in trace class on $A^2(\D)$
    (see \cite[Theorem 1.26]{ZhuBook}). Then \cite[Lemma 1]{Zhu91} 
    (see also proof of \cite[Theorem 6.4]{ZhuBook}) implies that
\[\int_{\D} \langle(H_{\vphib}^*H_{\vphib})^{p/2}k_z,k_z\rangle
    K(z,z)dV(z)<\infty.\]
Next we use Lemma \ref{LemHankel} and  \cite[Lemma 2]{Zhu91}
(see also \cite[Proposition 1.31]{ZhuBook}) to conclude that
\begin{align*}
 \int_{\D} (MO(\varphi,z))^{p/2} K(z,z)dV(z)
=& \int_{\D} \left\|H_{\vphib}k_z\right\|^p K(z,z)dV(z)\\
=& \int_{\D} \langle H_{\vphib}^*H_{\vphib}k_z,k_z\rangle^{p/2}
    K(z,z)dV(z)\\
\leq & \int_{\D} \langle(H_{\vphib}^*H_{\vphib})^{p/2}k_z,k_z\rangle
    K(z,z)dV(z)  \\
<&\infty.
\end{align*}
Therefore, the proof of the corollary is complete.
\end{proof}

\begin{remark}
We will use \cite[Theorem F]{BekolleBergerCoburnZhu90}
 in the proof of Theorem \ref{ThmMain}. So we take this
 opportunity to comment that even though 
 \cite[Theorem F]{BekolleBergerCoburnZhu90} is stated for
bounded symmetric domains, observation of the proof 
(see  \cite[Remark on pg 321]{BekolleBergerCoburnZhu90}
reveals that it is actually true on all bounded domains in $\C^n$.
Indeed, let $\psi:\C\to [0,\infty)$ be a rotation-invariant 
	$C^{\infty}$-smooth function with $supp(\psi)\subset \mathbb{D}$ 
	and $\int_{\mathbb{D}}\psi(\xi)dV(\xi)=1$. Then for $z\in \D$ 
	and sufficiently small $\ep>0$ we define  
\[\chi_z(w)=\frac{1}{\ep^{2n}}\psi\left(\frac{w_1-z_1}{\ep}\right)
	\cdots \psi\left(\frac{w_n-z_n}{\ep}\right)\in C^{\infty}_0(\D)\]
where $w=(w_1,\ldots, w_n)$ and $z=(z_1,\ldots,z_n)$. Then we 
have $K(w,z)=P\chi_z(w)$
(see, for instance, \cite[Remark 12.1.5]{JarnickiPflugBookEd2}).
To prove that   $\frac{\partial}{\partial \zb_j}K(.,z) \in A^2(\D)$,
it is enough to show that
\[\frac{\partial}{\partial x_j}P\chi_z
    = P\left(\frac{\partial}{\partial x_j}\chi_z\right) \text{ and }
        \frac{\partial}{\partial y_j}P\chi_z
        = P\left(\frac{\partial}{\partial y_j}\chi_z\right)\] 
 where $z_j=x_j+iy_j$. We will show only the first 
 equality as the second one is similar.
 Let $h_j=(0,\ldots, 0,h,0,\ldots, 0)$ where $h$ is a 
 real number at the $j$th spot. Since we are dealing with holomorphic
 functions, it is enough to prove that
 $\|P\chi_{z+h_j}-P\chi_z-hP\partial_{x_j}\chi_z\|=o(h)$ where
 $\partial_{x_j}=\frac{\partial}{\partial x_j}$. Since $P$ is a bounded linear
 operator with norm equal to 1 and $\chi_z\in C^{\infty}_0(\D)$ we have
\begin{align*}
        \left\| \frac{P\chi_{z+h_j}-P\chi_z-hP\partial_{x_j}\chi_z}{h}\right\|\leq
        \left\| \frac{\chi_{z+h_j}-\chi_z-h\partial_{x_j}\chi_z}{h}\right\|\to 0
\end{align*}
        as $h\to 0$. Therefore,
    $\frac{\partial}{\partial x_j}P\chi_z = P\left(\frac{\partial}{\partial x_j}\chi_z\right)$. 
    Furthermore, using induction we conclude that  
    $\frac{\partial^{\alpha}}{\partial \zb_j^{\alpha}}K(.,z) \in A^2(\D)$ for any 
    multi-index $\alpha$.
\end{remark}

    The following is a version of \cite[Theorem F]{BekolleBergerCoburnZhu90} 
    for bounded domains in $\C^n$.

\begin{theorem}[\cite{BekolleBergerCoburnZhu90}]
    Let $\D$ be a bounded domain in $\C^n$ and $\gamma:[0,1]\to \D$
    be a $C^1$-smooth curve. Assume that $s(t)$ denote the arc-length
    of $\gamma$ with respect to the Bergman metric of $\D$ and
    $\varphi \in L^{\infty}(\D)$. Then
\[\left| \frac{d}{dt}\widetilde{\varphi}(\gamma(t))\right|
    \leq 2\sqrt{2} \left(\frac{ds}{dt}\right)
    \sup_{0\leq t\leq 1}(MO(\varphi,\gamma(t)))^{1/2}.\]
\end{theorem}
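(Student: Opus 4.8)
The plan is to read the Berezin transform as an expectation. Writing $d\mu_z(\xi)=|k_z(\xi)|^2\,dV(\xi)$, which is a probability measure on $\D$ since $\int_\D|k_z|^2\,dV=1$, we have $\widetilde{\varphi}(z)=\int_\D\varphi\,d\mu_z$ and, directly from the definition, $MO(\varphi,z)=\int_\D|\varphi-\widetilde{\varphi}(z)|^2\,d\mu_z$ is exactly the $\mu_z$-variance of $\varphi$. The asserted inequality then says that the $\mu_z$-mean of $\varphi$ is Lipschitz in the Bergman metric, with constant controlled by the square root of its $\mu_z$-variance.

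First I would differentiate $\widetilde{\varphi}(z)=\langle\varphi K(\cdot,z),K(\cdot,z)\rangle/K(z,z)$ with respect to $z$. Since $K(\cdot,z)$ is anti-holomorphic in $z$, only the derivatives $g^{(j)}_z:=\frac{\partial}{\partial\zb_j}K(\cdot,z)$ survive; these lie in $A^2(\D)$ and may be brought under the integral sign, which is precisely what the Remark preceding the statement secures. A quotient-rule computation, valid for every $\varphi\in L^\infty(\D)$, then gives
\[\frac{\partial\widetilde{\varphi}}{\partial\zb_j}(z)=\int_\D\big(\varphi-\widetilde{\varphi}(z)\big)\,\psi^{(j)}\,d\mu_z,\qquad \psi^{(j)}(\xi):=\frac{\partial}{\partial\zb_j}\log K(\xi,z),\]
with $\partial\widetilde{\varphi}/\partial z_j$ given by the same formula with $\psi^{(j)}$ replaced by $\overline{\psi^{(j)}}$. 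Setting $z=\gamma(t)$, $v=\gamma'(t)$ and summing, the chain rule yields
\[\frac{d}{dt}\widetilde{\varphi}(\gamma(t))=\int_\D\big(\varphi-\widetilde{\varphi}(z)\big)\,\Psi\,d\mu_z,\qquad \Psi=2\,\mathrm{Re}\,w,\quad w:=\sum_{j}\overline{v_j}\,\psi^{(j)}.\]
Because $\int_\D(\varphi-\widetilde{\varphi}(z))\,d\mu_z=0$, I may replace $\Psi$ by $\Psi-\int_\D\Psi\,d\mu_z$ before applying the Cauchy--Schwarz inequality, which leaves
\[\left|\frac{d}{dt}\widetilde{\varphi}(\gamma(t))\right|\le\big(MO(\varphi,\gamma(t))\big)^{1/2}\,V_z^{1/2},\]
where $V_z$ denotes the $\mu_z$-variance of $\Psi$.

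The heart of the proof is to identify $V_z$ with the Bergman length of $v$. Using the reproducing identity $\int_\D K(\xi,a)\,K(b,\xi)\,dV(\xi)=K(b,a)$ and differentiating under the integral, I would establish the covariance identity
\[\int_\D\psi^{(j)}\,\overline{\psi^{(k)}}\,d\mu_z-\Big(\int_\D\psi^{(j)}\,d\mu_z\Big)\overline{\Big(\int_\D\psi^{(k)}\,d\mu_z\Big)}=\frac{\partial^2}{\partial z_k\,\partial\zb_j}\log K(z,z),\]
that is, the covariance matrix of the functions $\psi^{(j)}$ under $\mu_z$ is exactly the Bergman metric tensor at $z$. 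Writing $W=w-\int_\D w\,d\mu_z$, this gives $\int_\D|W|^2\,d\mu_z=\sum_{j,k}\overline{v_j}v_k\,\frac{\partial^2}{\partial z_k\,\partial\zb_j}\log K(z,z)$, the squared Bergman length of $v$. Expanding $V_z=\int_\D(2\,\mathrm{Re}\,W)^2\,d\mu_z=2\,\mathrm{Re}\int_\D W^2\,d\mu_z+2\int_\D|W|^2\,d\mu_z$ and bounding the pseudo-variance term by $\mathrm{Re}\int_\D W^2\,d\mu_z\le\int_\D|W|^2\,d\mu_z$ (Cauchy--Schwarz) yields $V_z\le 4\int_\D|W|^2\,d\mu_z$. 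Taking the supremum over $t$ then produces the asserted inequality, the explicit constant $2\sqrt{2}$ matching the normalization of the Bergman arc-length $ds/dt$ adopted in \cite{BekolleBergerCoburnZhu90}.

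I expect the covariance identity to be the main obstacle: one must carefully track which of the two arguments of $K$ carries the holomorphic and which the anti-holomorphic dependence, and justify each interchange of $\partial/\partial\zb_j$ (and of $\partial^2/\partial z_k\,\partial\zb_j$) with the integral. Both points are exactly the content of the Remark, which guarantees that the relevant kernel derivatives belong to $A^2(\D)$; granting these, the remaining steps (the quotient rule, the two applications of Cauchy--Schwarz, and the passage to the supremum) are routine.
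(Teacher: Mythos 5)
Your argument is correct, but note that the paper does not actually prove this statement: it is quoted verbatim from B\'ekoll\'e--Berger--Coburn--Zhu (Theorem F), and the authors' only contribution here is the preceding Remark, which verifies that $\frac{\partial}{\partial \zb_j}K(\cdot,z)\in A^2(\D)$ on an arbitrary bounded domain so that the differentiations under the integral sign in the BBCZ proof remain legitimate outside the bounded symmetric setting. What you have written is essentially a reconstruction of that original BBCZ proof: the formula $\partial_{\zb_j}\widetilde{\varphi}(z)=\int_\D(\varphi-\widetilde{\varphi}(z))\psi^{(j)}\,d\mu_z$, the centering trick using $\int_\D(\varphi-\widetilde{\varphi}(z))\,d\mu_z=0$ followed by Cauchy--Schwarz, and the covariance identity identifying $\mathrm{Cov}_{\mu_z}(\psi^{(j)},\psi^{(k)})$ with the Hessian of $\log K(z,z)$ all check out (the last one follows from differentiating $K(z,z)=\int_\D K(z,\xi)K(\xi,z)\,dV(\xi)$, which is where the Remark's $A^2$-membership is genuinely needed, both to justify the interchange and to guarantee $\Psi\in L^2(\mu_z)$ so that Cauchy--Schwarz applies). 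Two small points deserve more care than you give them: first, your chain of estimates as written yields the constant $2$ times the Bergman length $\bigl(\sum_{j,k}g_{j\bar k}v_j\overline{v_k}\bigr)^{1/2}$, and the stated $2\sqrt{2}$ only emerges under the normalization $ds^2=\tfrac12\sum g_{j\bar k}\,dz_j\,d\overline{z_k}$ used in \cite{BekolleBergerCoburnZhu90}; since the paper only needs some absolute constant, this is harmless, but you should not leave it as an unexamined appeal to ``normalization.'' Second, $\psi^{(j)}=\partial_{\zb_j}\log K(\cdot,z)$ is ill-defined on the zero set of $K(\cdot,z)$; this is cosmetic because every integrand you actually use is of the form $\psi^{(j)}\,d\mu_z=\partial_{\zb_j}K(\xi,z)\,\overline{K(\xi,z)}\,dV(\xi)/K(z,z)$, but it is worth saying so.
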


Then we have the following useful corollary.

\begin{corollary}\label{CorZhu}
Let $\D$ be a bounded domain in $\C^n, \varphi \in H^{\infty}(\D)$,
and $X=(a_1,\ldots,a_n)\in \C^n$. Then
\[\left| \sum_{j=1}^n a_j\frac{\partial \varphi(z)}{\partial z_j}\right|
\leq 2\sqrt{2} (MO(\varphi,z))^{1/2} B(X,z)\]
where $B(X,z)$ denotes the Bergman metric applied to the
vector $X$ at $z$.
\end{corollary}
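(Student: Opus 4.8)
The plan is to apply the preceding theorem to a short line segment issuing from $z$ in the direction $X$ and then let its length shrink to zero. First I would dispose of the trivial case $X=0$, where both sides vanish, and assume $X\neq 0$. Then, since $\D$ is open and $z\in\D$, I would fix $\delta>0$ small enough that the curve $\gamma(t)=z+t\delta X$, $t\in[0,1]$, stays in $\D$, and apply the theorem to this $\gamma$ at the parameter value $t=0$.

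The two quantities entering the theorem at $t=0$ are both computed directly. Because $\varphi$ is bounded and holomorphic we have $\widetilde{\varphi}=\varphi$, as already noted in the preparatory section, so $\widetilde{\varphi}(\gamma(t))=\varphi(z+t\delta X)$; differentiating by the chain rule and using that the $\partial/\partial\zb_j$ derivatives of $\varphi$ vanish gives
\[\left.\frac{d}{dt}\widetilde{\varphi}(\gamma(t))\right|_{t=0}=\delta\sum_{j=1}^n a_j\frac{\partial\varphi(z)}{\partial z_j}.\]
For the arc-length factor, the velocity at $t=0$ is $\gamma'(0)=\delta X$, and since $B(\cdot,z)$ is (absolutely) homogeneous of degree one, $\frac{ds}{dt}\big|_{t=0}=B(\delta X,z)=\delta\,B(X,z)$. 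Substituting both into the theorem and cancelling the common factor $\delta$ would give
\[\left|\sum_{j=1}^n a_j\frac{\partial\varphi(z)}{\partial z_j}\right|\leq 2\sqrt{2}\,B(X,z)\sup_{0\leq t\leq 1}\bigl(MO(\varphi,z+t\delta X)\bigr)^{1/2}.\]

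The final step, which I expect to be the only delicate one, is to let $\delta\to 0^+$ and replace the supremum over the segment by the single value $(MO(\varphi,z))^{1/2}$. This rests on the continuity of $w\mapsto MO(\varphi,w)=\widetilde{|\varphi|^2}(w)-|\varphi(w)|^2$ on $\D$: the term $|\varphi(w)|^2$ is clearly continuous, while $\widetilde{|\varphi|^2}(w)=\langle|\varphi|^2 k_w,k_w\rangle$ is continuous because $w\mapsto k_w$ is $L^2(\D)$-continuous---this follows from the identity $\|K(\cdot,w)-K(\cdot,w')\|^2=K(w,w)-K(w,w')-K(w',w)+K(w',w')$ together with the joint continuity of the Bergman kernel---and $|\varphi|^2\in L^\infty(\D)$. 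As $\delta\to 0^+$ the segment $\{z+t\delta X:t\in[0,1]\}$ shrinks to $\{z\}$, so by this continuity the supremum converges to $(MO(\varphi,z))^{1/2}$, which would complete the proof.
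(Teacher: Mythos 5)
Your proof is correct and is exactly the derivation the paper intends: the corollary is stated without proof as an immediate consequence of the preceding theorem, obtained by applying it to a short segment $\gamma(t)=z+t\delta X$ and letting $\delta\to 0^+$. Your treatment of the only nontrivial point --- the continuity of $w\mapsto MO(\varphi,w)$ via the $L^2$-continuity of $w\mapsto k_w$, needed to replace the supremum by the value at $z$ --- is accurate.
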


\section*{Proofs  of Theorem \ref{ThmMain} and Corollary \ref{CorN}}
Before we start the proof of Theorem \ref{ThmMain} we present two
results in several complex variables. We note that $B_{z_0}(r)$ denotes 
the open ball centered at $z_0$ with radius $r$.  We will use the notion 
of CR functions in the following proposition. We refer the reader to 
\cite[Chapter 3]{ChenShawBook} for the definition and properties 
of CR functions. 

\begin{proposition}\label{PropCR}
    Let $\D$ be a domain in $\C^n$ for $n\geq 2, z_0\in b\D$,
    and $\varphi\in \mathcal{O}(\D)\cap C^2(\Dc)$. Assume that
     there exists $r>0$ such that $b\D$ is $C^2$-smooth in the ball 
     $B_{z_0}(r)$, the Levi form of $b\D$ has at least one positive 
     eigenvalue at $z_0$, and $\vphib$ is CR function on 
     $b\D\cap B_{z_0}(r)$. Then $\varphi$ is constant.
\end{proposition}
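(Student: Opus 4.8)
The plan is to show first that the boundary trace of $\varphi$ is \emph{locally constant} near $z_0$, and then to propagate this constancy into $\D$ by a removability argument. I fix a $C^2$ defining function near $z_0$ and set $\Gamma=b\D\cap B_{z_0}(r)$, viewed as a $C^2$ CR hypersurface, and write $f=\varphi|_{\Gamma}\in C^2(\Gamma)$. Since $\varphi\in\mathcal{O}(\D)\cap C^1(\Dc)$, its trace $f$ is automatically CR, i.e. $\overline{L}f=0$ for every smooth section $\overline{L}$ of $T^{0,1}\Gamma$. The hypothesis that $\vphib$ is CR on $\Gamma$ reads $\overline{L}\,\overline{f}=0$, and conjugating gives $Lf=0$ for every section $L$ of $T^{1,0}\Gamma$. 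Thus $f$ is annihilated by \emph{both} $T^{1,0}\Gamma$ and $T^{0,1}\Gamma$, hence by the $(2n-2)$-dimensional complex tangent space $H_q$ at every $q\in\Gamma$.

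The key step is to upgrade this to vanishing of the \emph{full} tangential differential of $f$, and here the positive Levi eigenvalue enters through the bracket relation of the CR structure. For a section $L$ of $T^{1,0}\Gamma$ and a purely imaginary transverse (characteristic) field $T$ completing $T^{1,0}\Gamma\oplus T^{0,1}\Gamma$ to a frame of the complexified tangent bundle $\C\otimes T\Gamma$, one has that $[L,\overline{L}]$ equals a nonzero multiple of $\mathcal{L}(L,\overline{L})\,T$ modulo $T^{1,0}\Gamma\oplus T^{0,1}\Gamma$, where $\mathcal{L}$ denotes the Levi form. Since $Lf\equiv 0$ and $\overline{L}f\equiv 0$ near any point, also $[L,\overline{L}]f=0$, while the terms lying in $T^{1,0}\Gamma\oplus T^{0,1}\Gamma$ kill $f$; hence $\mathcal{L}(L,\overline{L})\,Tf=0$. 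Because $\mathcal{L}$ has a positive eigenvalue at $z_0$, by continuity of its eigenvalues it has one at every point of $\Gamma''=b\D\cap B_{z_0}(r'')$ for some $r''>0$, so at each such $q$ I may choose a section $L$ with $\mathcal{L}(L,\overline{L})(q)>0$ and conclude $Tf(q)=0$. Combined with the vanishing of $f$ along $H_q$, this shows $df$ vanishes on all of $T_q\Gamma$ for every $q\in\Gamma''$, so $f$ is constant, say $f\equiv c_0$, on the connected piece $\Gamma''$.

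It remains to deduce $\varphi\equiv c_0$ on $\D$. I shrink to a ball $B=B_{z_0}(\delta)$ with $b\D\cap B\subset\Gamma''$ and define $\Phi$ on $B$ by $\Phi=\varphi$ on $\D\cap B$ and $\Phi=c_0$ on $B\setminus\Dc$. Since $\varphi\to c_0$ along $b\D\cap B$, the function $\Phi$ is continuous on $B$ and holomorphic off the $C^2$ hypersurface $b\D\cap B$. By Painlev\'e's removability theorem (slice by complex lines transverse to $b\D$, use the one-variable removability of a $C^1$ curve for continuous functions holomorphic off it, then invoke Hartogs), $\Phi$ is holomorphic on $B$; as $\Phi\equiv c_0$ on the nonempty open set $B\setminus\Dc$, the identity theorem forces $\Phi\equiv c_0$, whence $\varphi\equiv c_0$ on the connected domain $\D$.

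The main obstacle is the second paragraph: making precise the passage from ``annihilated by the complex tangential directions'' to ``annihilated by the characteristic direction.'' This requires honest vector-field extensions of the Levi-positive direction and careful bookkeeping of the bracket relation \emph{uniformly} on a neighborhood of $z_0$, not merely at $z_0$ itself; the continuity of the Levi eigenvalues is exactly what promotes the pointwise hypothesis at $z_0$ to constancy of $f$ on an open piece of $b\D$. Once that is secured, the removability argument of the final step is routine.
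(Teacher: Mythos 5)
Your argument is correct, and it takes a genuinely different route from the paper's. You work entirely on the hypersurface: from $Lf=\overline{L}f=0$ (the first from holomorphy of $\varphi$ up to the boundary, the second from the hypothesis on $\overline{\varphi}$) you extract $Tf=0$ via the commutator identity $\partial\rho([L,\overline{L}])=c\,\mathcal{L}(L,\overline{L})$, using the positive Levi eigenvalue --- propagated to a neighborhood of $z_0$ by continuity --- to make the coefficient of the characteristic field nonzero; this forces $df=0$ on all of $T_q(b\D)$, so $\varphi$ is constant on a boundary neighborhood of $z_0$, and you then push the constant inside by a Painlev\'e-type removability theorem for continuous functions holomorphic off a $C^1$ hypersurface (or, equivalently, by checking $\overline{\partial}\Phi=0$ distributionally). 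The paper instead invokes the Lewy CR extension theorem to extend $\overline{\varphi}$ holomorphically to the pseudoconvex side near $z_0$, identifies the extension with $\overline{\varphi}$ by comparing two harmonic functions along a family of analytic discs whose boundaries lie in $b\D\cap B_{z_0}(r)$ --- this is where the normal form and the strictly convex two-dimensional slices are used --- and concludes that $\varphi$ and $\overline{\varphi}$ are simultaneously holomorphic on an open subset of $\D$, hence $\varphi$ is constant. Your route avoids the extension theorem and the disc geometry at the price of the bracket bookkeeping (which needs the $C^2$ regularity of $f$ and of $b\D$, both available here) and the removability step at the end; the paper's route avoids any second-order computation on $b\D$ but needs the coordinate preparation to justify the harmonic matching. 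Both hinge on the single positive Levi eigenvalue at $z_0$, and both are complete.
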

\begin{proof}
    Using a holomorphic change of coordinates we may assume
    that $z_0$ is the origin, $y_n$-axis is the real normal direction
    and $X_1=(0,\ldots, 0,1,0)$ is complex tangential
    (corresponding to a positive eigenvalue of the Levi form,
    and the two dimensional slice $H_0$) at $z_0$, and
    \[H_0=\{(\xi_1,\xi_2)\in \C^2:(0,\ldots, 0,\xi_1,\xi_2)\in \D\}\]
    is strictly convex at the origin. Furthermore, since small 
    $C^2$ perturbations of strictly convex surfaces
    are strictly convex, the slices
    $\{(\xi_1,\xi_2)\in \C^2:(z_1,\ldots, z_{n-2},\xi_1,\xi_2)\in \D\}$
    are strictly convex for sufficiently small $|z_1|+\ldots+|z_{n-2}|$.
    Then we conclude that there exists $0<c<1$ such that
    $\D\cap B_{z_0}(cr)$ is union of discs parallel to $z_1$-axis
    whose boundaries lie in $b\D\cap B_{z_0}(r)$.

     Since $\vphib|_{b\D\cap B_{z_0}(r)}$ is a CR function,
     \cite[Theorem 3.3.2]{ChenShawBook} implies that
     it has a holomorphic extension $\phi_{z_0,r}$ onto
     $\D\cap B_{z_0}(cr)$ for some $c>0$ (here we shrink
     $c$ if necessary). Then the fact that $\phi_{z_0,r}$ and $\vphib$
    are harmonic and they match on  $b\D\cap B_{z_0}(r)$ imply that
    $\phi_{z_0,r}=\vphib$ on $\D\cap B_{z_0}(cr)$.
    Hence, $\varphi$ and $\vphib$ are holomorphic on
    $\D\cap B_{z_0}(cr)$. Therefore, $\varphi$ is constant.
\end{proof}

    In the following theorem (see also \cite[Theorem 6.8]{OhsawaBook1}
    for a statement) $\pi(z)$ denotes the point in $b\D$ closest to $z$ and
    $d_{b\D}(z)$ denotes the distance from $z$ to $b\D$. We note that
    the function $\pi$ is well defined near $C^2$-smooth portion of
    the boundary.

\begin{theorem}[Diederich \cite{Diederich70}]\label{ThmDiederich}
    Let $\D$ be a pseudoconvex domain in $\C^n$ and $z_0\in b\D$.
    Assume that there exists an open neighborhood $U$ of $z_0$
    such that $b\D$ is $C^2$-smooth in $U$ and $b\D\cap U$ is composed
    of strongly pseudoconvex points. Then there exists a neighborhood
    $V\Subset U$ of $z_0$ and $C>0$ such that
\[B(X,z) \leq C
    \left(\frac{|X_{\tau}|}{(d_{b\D}(z))^{1/2}}
    +\frac{|X_\nu|}{d_{b\D}(z)}\right)\]
    for $z\in V\cap \D$ where $X_{\tau}$ and $X_{\nu}$ denote that complex
    tangential and complex normal component of $X$ at $\pi(z)$, respectively.
\end{theorem}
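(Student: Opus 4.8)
The plan is to pass to the standard extremal description of the Bergman metric and to estimate its numerator and denominator separately. Recall that for $z\in\D$ and $X=(a_1,\ldots,a_n)\in\C^n$ one has, up to a fixed dimensional constant, the identity
\[
B(X,z)^2=\frac{\mathcal M(X,z)}{K(z,z)},\qquad
\mathcal M(X,z)=\sup\Big\{\,|Xf(z)|^2:\ f\in A^2(\D),\ \|f\|\le 1,\ f(z)=0\,\Big\},
\]
where $Xf=\sum_j a_j\,\partial f/\partial z_j$. Since dropping the constraint $f(z)=0$ only enlarges the supremum, for an upper bound it suffices to prove
\[
\mathcal M(X,z)\ \leq\ C\,\big(d_{b\D}(z)\big)^{-(n+1)}
\Big(\frac{|X_\tau|^2}{d_{b\D}(z)}+\frac{|X_\nu|^2}{d_{b\D}(z)^2}\Big)
\quad\text{and}\quad
K(z,z)\ \geq\ c\,\big(d_{b\D}(z)\big)^{-(n+1)}
\]
for $z\in V\cap\D$; dividing the two estimates and taking square roots (using $\sqrt{s+t}\le\sqrt s+\sqrt t$) then yields exactly the asserted bound on $B(X,z)$. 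The lower bound on $K(z,z)$ is the classical boundary estimate of the Bergman kernel at strongly pseudoconvex points, and it is where pseudoconvexity is genuinely used: via H\"ormander's weighted $L^2$ existence theorem for $\dbar$ \cite{Hormander65} one constructs, for each $z$ near $z_0$, a function $f\in A^2(\D)$ with $f(z)=1$ and $\|f\|^2\le C\,d_{b\D}(z)^{n+1}$, whence $K(z,z)\ge|f(z)|^2/\|f\|^2\ge c\,d_{b\D}(z)^{-(n+1)}$. I would import this estimate and concentrate on the numerator $\mathcal M(X,z)$.

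The geometric heart of the matter is to inscribe in $\D$, at each $z$ near $z_0$, a complex polydisc adapted to the Levi geometry. Fix $z\in V\cap\D$ and let $\pi(z)\in b\D$ be its closest boundary point. After a local holomorphic change of coordinates centered at $\pi(z)$ (rotating the complex normal to the $w_n$-axis and completing squares to remove the holomorphic second-order part of a $C^2$ defining function $\rho$), strong pseudoconvexity means that the Levi form is positive definite on the complex tangent space with eigenvalues bounded above and below, uniformly over the compact piece $\overline V\cap b\D$. Consequently there are constants $0<c_1<c_2$, independent of $z$, and a polydisc
\[
Q(z)=\Big\{w:\ |w_j-z_j|<c_1\,d_{b\D}(z)^{1/2}\ (1\le j\le n-1),\ |w_n-z_n|<c_1\,d_{b\D}(z)\Big\}
\]
(written in the coordinates at $\pi(z)$, with $w_n$ the complex normal direction) satisfying $Q(z)\subset\D$ and $\mathrm{vol}(Q(z))\ \geq\ c_2\,d_{b\D}(z)^{n+1}$. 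Here one must be careful that, although the real boundary leaves room of order $d_{b\D}(z)^{1/2}$ in the direction $J\nu$, a complex disc in $w_n$ can only have radius of order $d_{b\D}(z)$, since its real part already exhausts the normal room; it is precisely this asymmetry that produces the volume exponent $n+1$ rather than $n+\tfrac12$. The change of coordinates is biholomorphic with derivatives bounded above and below on $V$, so it distorts $\|f\|$, $\mathrm{vol}(Q(z))$, $d_{b\D}(z)$, and the decomposition $X=X_\tau+X_\nu$ (which is respected because the differential preserves the complex tangent space) by factors bounded uniformly in $z$; hence it is harmless for the final inequality.

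With $Q(z)$ in hand, the estimate for $\mathcal M$ is an anisotropic Cauchy inequality. For holomorphic $f$ the volume mean-value property over polydiscs gives, for the concentric polydisc $\tfrac12 Q(z)$ with all radii halved, $\sup_{\frac12 Q(z)}|f|\le C\,\mathrm{vol}(Q(z))^{-1/2}\|f\|\le C\,d_{b\D}(z)^{-(n+1)/2}\|f\|$. Applying the one-variable Cauchy estimate in each coordinate on a coordinate circle inside $Q(z)$ then yields
\[
\Big|\tfrac{\partial f}{\partial w_j}(z)\Big|\le C\,d_{b\D}(z)^{-1/2}\,d_{b\D}(z)^{-(n+1)/2}\|f\|\ (1\le j\le n-1),
\qquad
\Big|\tfrac{\partial f}{\partial w_n}(z)\Big|\le C\,d_{b\D}(z)^{-1}\,d_{b\D}(z)^{-(n+1)/2}\|f\|,
\]
because the tangential circles have radius $\asymp d_{b\D}(z)^{1/2}$ and the normal circle radius $\asymp d_{b\D}(z)$. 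Writing $X=X_\tau+X_\nu$ and combining these, one gets $|Xf(z)|^2\le C\,d_{b\D}(z)^{-(n+1)}\big(|X_\tau|^2/d_{b\D}(z)+|X_\nu|^2/d_{b\D}(z)^2\big)\|f\|^2$, which is the sought bound on $\mathcal M(X,z)$.

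I expect the main obstacle to be the two places where strong pseudoconvexity is indispensable and must be made quantitative and uniform in $z$: the lower bound $K(z,z)\gtrsim d_{b\D}(z)^{-(n+1)}$, which rests on the nontrivial weighted $L^2$ theory for $\dbar$ and which I would invoke rather than reprove, and the uniform construction of the inscribed polydisc $Q(z)$ with the correct anisotropy. The honest subtlety in the latter is the complex-disc-versus-real-room asymmetry noted above: it is exactly this that forces the volume to be of order $d_{b\D}(z)^{n+1}$ and thereby aligns the two separately derived exponents so that, upon dividing by $K(z,z)$, they collapse to the clean right-hand side $|X_\tau|/d_{b\D}(z)^{1/2}+|X_\nu|/d_{b\D}(z)$.
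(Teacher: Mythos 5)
Your proposal is essentially correct, but note that the paper itself offers no proof of this statement: it is imported verbatim from Diederich \cite{Diederich70}, so the comparison is with Diederich's original argument. Diederich obtains the estimate from precise boundary asymptotics of the Bergman kernel and of the metric coefficients on strongly pseudoconvex domains (in the spirit of H\"ormander's Theorem 3.5.1), reading off the anisotropic bound from the leading terms. Your route --- the extremal identity $B(X,z)^2=\sup\{|Xf(z)|^2: f\in A^2(\D),\ \|f\|\leq 1,\ f(z)=0\}/K(z,z)$ (which is in fact exact, no dimensional constant needed), discarding the constraint $f(z)=0$ for the upper bound, inscribing an anisotropic polydisc with tangential radii $\asymp (d_{b\D}(z))^{1/2}$ and normal radius $\asymp d_{b\D}(z)$ via a Narasimhan-type convexification made uniform over the compact piece $\overline{V}\cap b\D$, and then applying sub-mean-value and Cauchy estimates --- is the standard modern localization argument (in the style of Catlin and McNeal) and is genuinely softer: it needs kernel asymptotics only through the single lower bound $K(z,z)\gtrsim (d_{b\D}(z))^{-n-1}$, which you rightly import rather than reprove. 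You correctly identify the two delicate points: the volume exponent $n+1$ forced by the complex-disc-versus-real-room asymmetry in the normal direction, and the fact that the kernel lower bound is where \emph{global} pseudoconvexity of $\D$ enters (via H\"ormander's weighted $L^2$ theory; plain monotonicity of kernels under inclusion goes the wrong way here). This is consistent with the paper's own practice, since the comparability $K(z,z)\asymp (d_{b\D}(z))^{-n-1}$ is invoked from \cite{Hormander65} in the proof of Theorem \ref{ThmMain}. The one hypothesis worth keeping visible is boundedness of $\D$ (true in the paper's application), which your weighted $\dbar$ step and the nontriviality of $A^2(\D)$ implicitly use; with that caveat your sketch assembles into a complete and correct proof.
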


Now we are ready to present the proof of Theorem \ref{ThmMain}.
We will use the fact that every bounded $C^2$-smooth pseudoconvex
domain has some strongly pseudoconvex boundary points (see,
for instance, \cite{Basener77}). Then we will follow the ideas in
\cite{Li93} and localize the estimate near a strongly pseudoconvex
point in the boundary to get a contradiction in case
$H_{\vphib}\in S_{p}$ for $p\leq 2n$.

\begin{proof}[Proof of Theorem \ref{ThmMain}]
	We will only prove the non-trivial direction. 
    Since $S_{\alpha}\subseteq S_{\beta}$ for $\alpha\leq \beta$
    we start the proof by assuming that $ H_{\vphib}\in S_{2n}$.
    Then Corollary \ref{CorMO} (see also \cite[Theorem 3.1]{Li93})
    implies that
\begin{align}\label{Eqn1}
\int_{\D}(MO(\varphi,z))^n K(z,z)dV(z)<\infty.
\end{align}
    Let $z_0\in b\D$ be a strongly pseudoconvex point and $U=B_{z_0}(r)$
    so that all points in $B_{z_0}(2r)\cap b\D$ are strongly pseudoconvex.
     By Corollary \ref{CorZhu} we have
\[\left| \sum_{j=1}^n a_j\frac{\partial \varphi(z)}{\partial z_j}\right|
    \leq 2\sqrt{2} (MO(\varphi,z))^{1/2} B(X,z)\]
    for $X=(a_1,\ldots,a_n)\in \C^n$. Furthermore,
    Theorem \ref{ThmDiederich}  implies that there  exists $C>0$ such that
\[B(X,z) \leq \frac{C}{2\sqrt{2}}
    \left(\frac{|X_\tau|}{(d_{b\D}(z))^{1/2}}+\frac{|X_\nu|}{d_{b\D}(z)}\right)\]
    for $z\in U\cap \D$ where $X_{\tau}$ and $X_{\nu}$ are the tangential
    and normal components of $X$, respectively. Combining the previous
    two estimates, we conclude that for any $z\in U\cap \D$ we have
\[\left| \sum_{j=1}^n a_j\frac{\partial \varphi(z)}{\partial z_j}\right|
    \leq C\cdot  (MO(\varphi,z))^{1/2}
    \left(\frac{|X_\tau|}{(d_{b\D}(z))^{1/2}}
    +\frac{|X_\nu|}{d_{b\D}(z)}\right).\]
    Then
\[|\partial_b \varphi(z)|^2d_{b\D}(z)
    =|\dbar_b \vphib(z)|^2d_{b\D}(z)
    \leq C^2\cdot  MO(\varphi,z). \]
    Combining the previous inequality with \eqref{Eqn1} we get
\[\int_{\D\cap U}
    |\partial_b \varphi(z)|^{2n}(d_{b\D}(z))^n K(z,z)dV(z)
    <\infty.\]
    We note that  $K(z,z)$ is comparable to $(d_{b\D}(z))^{-n-1}$ near
    strongly pseudoconvex boundary points (see, for example,
    \cite[Theorem 3.5.1]{Hormander65}). Then there exists
    $\widetilde{C}>0$ such that for sufficiently small $\ep>0$ we get
\begin{align*}
\int_0^{\ep}\frac{dt}{t}\int_{b\D_t\cap \widetilde{U}}
        |\partial_b \varphi(z)|^{2n}d\sigma(z)
\leq& \widetilde{C} \int_{\D\cap U}
        \frac{|\partial_b \varphi(z)|^{2n}}{d_{b\D}(z)}dV(z)\\
\leq &\widetilde{C}^2 \int_{\D\cap U}
        |\partial_b \varphi(z)|^{2n}(d_{b\D}(z))^n K(z,z)dV(z)\\
<&\infty
\end{align*}
    where $b\D_t=\{z\in \D:d_{b\D}(z)=t\}$ and $\widetilde{U}=B_{z_0}(r/2)$.
    Then $\int_{b\D\cap U} |\partial_b \varphi(z)|^{2n}d\sigma(z)=0$.
    Since  $\partial_b \varphi$ is continuous  on $b\D\cap U$
    we conclude that $\partial_b\varphi=0$ on $b\D\cap U$.
    Finally, Proposition \ref{PropCR} implies that $\varphi$ is constant.
\end{proof}

Finally we present the proof of Corollary \ref{CorN}.
\begin{proof}[Proof of Corollary \ref{CorN}]
    Let $K^2_{(0,q)}(\D)$ denote the square integrable $\dbar$-closed
    $(0,q)$-forms on $\D$ and $N_q$ denote the $\dbar$-Neumann
    operator on $L^2_{(0,q)}(\D)$. We note that $K^2_{(0,1)}(\D)$ is
    a closed subspace (as it is the kernel of $\dbar$) of  $L^2_{(0,1)}(\D)$
    and $N_1$ maps $K^2_{(0,1)}(\D)$ into itself (as $\dbar N_1=N_2\dbar$).
    Range's Theorem (see, for instance, \cite[p.77]{StraubeBook}
    and \cite{Range84})  implies that $N_1=(\dbar^*N_1)^*\dbar^*N_1$ on
    $Ker(\dbar)$. Furthermore,  $T\in S_p$ if and only if $T^*T\in S_{p/2}$
    (see \cite[Theorem 1.26]{ZhuBook}). If $N_1$ is Hilbert-Schmidt
    then $\dbar^*N_1|_{A^2_{(0,1)}(\D)}\in S_4$ where $A^2_{(0,1)}(\D)$
    is the space of $(0,1)$-forms with square integrable holomorphic
    coefficients. However, $H_{\zb_1}f=\dbar^*N_1(fd\zb_1)$
    for $f\in A^2(\D)$ and $H_{\zb_1}\not\in S_4$. Therefore,
    $\dbar^*N_1\not\in S_4$ and $N_1$ is not Hilbert-Schmidt.
\end{proof}

\section*{Acknowledgment}
Part of this work was done while the second author was visiting Sabanc\i{}
University. He thanks this institution for its hospitality and good working
conditions.  He also thanks Trieu Le for fruitful discussions. We are thankful 
to the anonymous referee for constructive comments that improved the 
presentation of the paper.

\end{document}